\shorttitle{} 
\begin{document}

\title{One Explicitly Solvable Model For The Galton-Watson Processes In the Random Environment} 

\authorone[University of Louisville]{Dan Han} 
\authortwo[University of North Carolina at Charlotte; National Research University “Higher School of Economics”]{Stanislav A. Molchanov}
\authorthree[University of North Carolina at Charlotte]{Yanjmaa Jutmaan}

\emailone{dan.han@louisville.edu} 
\addressone{Department of Mathematics, University of Louisville, KY 40292, USA.} 
\addresstwo{Department of Mathematics and Statistics, University of North Carolina at Charlotte, NC 28223, USA; National Research University “Higher School of Economics”, Moscow 101000, Russia.} 
\addressthree{Department of Mathematics and Statistics, University of North Carolina at Charlotte, NC 28223, USA.} 

\begin{abstract}
In this paper, we study the Galton-Watson process in the random environment for the particular case when the number of the offsprings in each generation has the fractional linear generation function with random parameters. In this case, the distribution of $N_t$, the number of particles at the moment time $t=0,1,2,\cdots$ can be calculated explicitly. We present the classification of such processes and limit theorems of two types: quenched type which is for the fixed realization of the random environment and annealed type which includes the averaging over the environment
\end{abstract}

\keywords{Galton-Watson process; Random environment ; Linear fractional generating functions} 

\ams{60J80}{60G51} 

\section{Introduction}
Galton-Watson process in the random environments was intensively discussed in modern probability field (\cite{athreya1971branching2}\cite{athreya1971branching}\cite{vatutin2020initial}\cite{vatutin2020survival}). 
In this paper, we will study one particular model with single-type particles, the discrete time $t=0,1,2,\cdots$ and the fractional linear generating function for the offspring distribution. The fractional linear probability generating function for branching process in the random environment first time was discussed probably in 1994 by D.R. Grey and Lu Zhunwen(\cite{grey1994fractional}),but they focused on the super-critical Smith-Wilkinson branching process and studied the case of 2D environments especially. In 2006, A. Joffe and G. Letac (\cite{joffe2006multitype}) studied the multitype branching process with linear fractional generating functions and the same denominator. In 2019, V.A. Vatutin and his group (\cite{vatutin2019}) investigated the limit behavior of only super-critical multitype branching processes in random environments with linear fractional offspring distributions but this study didn't contain the detailed categorization for such type branching processes. 

Let's give the description of our model. The probability space $(\Omega,\mathcal{F},P)$ in our case is the skew product of two spaces $\Omega=\Omega_m\times \Omega_{br}(\boldsymbol{\omega_m})$. $\Omega_m$ is the space of medium or environment. $\Omega_{br}(\boldsymbol{\omega_m})$ is the space containing the information of the branching and annihilating particles for fixed environment $\boldsymbol{\omega_m}$. Let $N_t$ be the number of the particles at the moment $t$, $t=0,1,2,\cdots$. $N_0=1$ and transition from $N_{t-1}$ to $N_t$ is given by the standard formula:

\begin{equation}
N_t=\sum\limits_{i=1}^{N_{t-1}}\xi_{t,i}(\boldsymbol{\omega_m})
\end{equation}

Here $\xi_{t,i}(\boldsymbol{\omega_m})$ are independent copies of the random variable $\xi_t(\boldsymbol{\omega_m})$, which gives the number of offsprings generated by different particles (among $N_{t-1}$) at the moment $t$. That is
$$P(\xi_t=k)=p_k(\omega_t).\,\,k=0,1,2,\cdots,\,\,\sum\limits_{k=0}^{\infty}p_k=1$$

The distribution of $\xi_{t,i}$ depends on some parameters $\vec{\pi}_t$ selected randomly and independently at the moments $t=1,2,\cdots$. One can identify $\boldsymbol{\omega_m}$ with the sequence $\{\vec{\pi}_t,t=1,2,3,\cdots\}$.

The elementary events associated with the non-homogeneous Galton-Watson process $N_t$ for fixed $\boldsymbol{\omega_m}$, we denote as $\omega_{br}(\boldsymbol{\omega_m})$. They form the second component $\Omega_{br}(\boldsymbol{\omega_m})$ in our skew product $\Omega=\Omega_m \times \Omega_{br}(\boldsymbol{\omega_m})$. 

For each random variable $\eta(\boldsymbol{\omega_m},\omega_{br})$ on the total probability space $\Omega$, one can introduce two types expectations and probabilities:

\begin{enumerate}[1)]
\item Quenched type

We call the following integral the quenched expectation:

\begin{equation}
E\eta=\int_{\Omega_{br}(\boldsymbol{\omega_m})}\eta(\boldsymbol{\omega_m},\omega_{br})d P(d\boldsymbol{\omega_m})
\end{equation}

Here $P^{\boldsymbol{\omega_m}}$ is the conditional distribution of the process $N_t,t\geq 0$ for the fixed $\boldsymbol{\omega_m}$. Similarly, say

\begin{equation}
P(\eta<x)=E\mathbf{1}_{\eta<x}
\end{equation}

the quenched probability. The quenched expectations or probabilities are the random variables on $\Omega_m$.

\item Annealed type

Additional integration over $\Omega_m$ leads to the total expectation and probability, we will use in this case notations $\mathcal{E}$ and $\mathcal{P}$ for expectation and probability. 

\begin{equation}
\mathcal{E}\eta(\boldsymbol{\omega_m},\omega_{br})=\int_{\Omega_m}E\eta\mathcal{P}(d \boldsymbol{\omega_m}) 
\end{equation}
\end{enumerate}

Under the fixed environment realization $\boldsymbol{\omega_m} \in \Omega_m$, the probability generating function of the offspring distribution is $\varphi_t(z,\boldsymbol{\omega_m})=Ez^{\xi_t(\boldsymbol{\omega_m})}$ and the probability generating function of $N_t$ is defined as 
$\Phi_t(z,\boldsymbol{\omega_m})=Ez^{N_t}$, then $\Phi_t(z,\boldsymbol{\omega_m})=\underbrace{\varphi_1(\varphi_2\cdots\varphi_t(z,\boldsymbol{\omega_m}))}_{t-fold}$. We will study the very particular case when $\varphi_t(z,\boldsymbol{\omega_m})$ is the fractional linear function:

\begin{equation}
\varphi_t(z,\boldsymbol{\omega_m})=\frac{a_t z+b_t}{c_t z+d_t}
\end{equation}

This fractional linear function is associated with matrix 
\begin{equation}\label{general mobius matrix}
\mathbf{I}_t=
\begin{pmatrix}
a_t & b_t \\
c_t & d_t
\end{pmatrix}
\end{equation}

We will assume that $\det \mathbf{I}_t>0$. Such fractional linear functions (\text{M\"obius} transforms) form a group with operation of composition and unity $\varphi(z)=z$.
The mapping $\varphi(z)=\displaystyle\frac{az+b}{cz+d}\rightarrow \mathbf{I}=\begin{pmatrix}
a & b \\
c & d
\end{pmatrix}$ is not one-to-one since $\mathbf{I}$ and $c\mathbf{I}$ present the same function $\varphi(z)$, but if we impose the condition $\det \mathbf{I}=1$, then the group of the 
\text{M\"{o}bius} transforms will be isomorphic to the group $SL(2,R)$, this group contains all $2\times 2$ real matrices with determinant 1. But we will not use this fact.

As easy to prove the function $\varphi(z)=\displaystyle\frac{az+b}{cz+d}$ is the generating function of the random variable $\xi$ with values $0,1,2,\cdots n$. if and only if it can be presented in the form 

\begin{equation}
\varphi_t(z)=\frac{(p-\alpha)z+\alpha}{(p-1)z+1}\,\,t=1,2,3,\cdots
\end{equation}

whose associated matrix is 

\begin{equation}
\mathbf{I}=
\begin{pmatrix}
p-\alpha & \alpha \\
p-1 & 1
\end{pmatrix}\end{equation}

where $(p,\alpha)\in [0,1]^2$. We will consider only non-degenerated case $(p,\alpha)\in (0,1)^2$. In this case, $\xi$ has the generalized geometric law:

\begin{equation}
P(\xi=0)=\alpha, P(\xi=k)=(1-\alpha)p(1-p)^{k-1}, k\geq 1
\end{equation}

and
\begin{equation}
\varphi_t(z,\boldsymbol{\omega_m})=\frac{(p_t-\alpha_t) z+\alpha_t}{(p_t-1) z+1}
\end{equation} 

with the associated independent matrices
\begin{equation}
\mathbf{I}=
\begin{pmatrix}
p_t-\alpha_t & \alpha_t \\
p_t-1 & 1
\end{pmatrix},\,\,
\det \mathbf{I}=p_t(1-\alpha_t)
\end{equation}

One can identify $\boldsymbol{\omega_m}$ with sequence of the independent, identically distributed vectors $(p_t,\alpha_t,t\geq 1)=\boldsymbol{\omega_m}$. 
In the future, we will denote $\beta_t=1-\alpha_t$, $q_t=1-p_t$. 

Section 2 in this paper will contain the calculation of the generating function $\Phi_t(z,\boldsymbol{\omega_m})=Ez^{N_t}$ and related objects such as quenched moments etc. In the section 3, we will give the classification of Galton-Watson processes in the random environment. It contains five categories, the standard super-critical and the critical cases of the classical homogeneous Galton-Watson theory will be divided into two sub-categories each. In the section 4, we will prove several quenched and annealed limit theorems using the asymptotic formulas for the random geometric progressions. We discuss the relation of our topic with the problem of the evaluation of the mass of cells in the one cell population (like plankton).

\section{Analysis of the fractional linear model}
Let us calculate the generating function $\Phi_t(z,\boldsymbol{\omega_m})$.

In our case,the matrix $\mathbf{I}_t(\boldsymbol{\omega_m})$ associated with the generating function $\varphi_t(z,\boldsymbol{\omega_m})=\displaystyle\frac{(p_t-\alpha_t)z+\alpha_t}{(p_t-1)z+1}$:

\begin{equation}
\mathbf{I}_t(\omega)=
\begin{pmatrix}
p_t-\alpha_t & \alpha_t \\
p_t-1 & 1
\end{pmatrix}
\end{equation}

The fractional-linear \text(M\"{o}bius) transform functions form a group and 

\begin{equation}
\Phi_t(z,\boldsymbol{\omega_m})=Ez^{N_t(\boldsymbol{\omega}_m)}=\underbrace{\varphi_1(\varphi_2\cdots\varphi_t(z,\boldsymbol{\omega}_m))}_{t-fold}=\frac{A_tz+B_t}{C_tz+D_t}
\end{equation}

and the associated matrix for $\Phi_t(z,\boldsymbol{\omega_m})$ is
$\Pi_t(\boldsymbol{\omega}_m)=
\begin{pmatrix}
A_t & B_t \\
C_t & D_t
\end{pmatrix}
=\mathbf{I}_1\mathbf{I}_2\cdots \mathbf{I}_t(\boldsymbol{\omega}_m)
$

Note the determinant of $I_t$ is $det(\mathbf{I}_t)=p_t(1-\alpha_t)$, thus the determinant of $\Pi_t(\boldsymbol{\omega}_m)=\displaystyle\prod\limits_{i=1}^{t}p_i\prod\limits_{i-1}^t \alpha_i$. And $E[\xi_t(\boldsymbol{\omega}_m)]=\varphi'_t(1)=\displaystyle\frac{1-\alpha_t}{p_t}$, from $N_t=\sum\limits_{i=1}^{N_{t-1}}\xi_{t,i}(\boldsymbol{\omega_m})
$, we can get $E[N_t]=\prod\limits_{i=1}^tE[\xi_{t,i}(\boldsymbol{\omega_m})]=\displaystyle\prod\limits_{i=1}^t \frac{1-\alpha_i}{p_i}=e^{\sum\limits_{s=0}^t\ln \frac{\beta}{p}(s,\boldsymbol{\omega}_m)}$ for the fixed environment realization $\boldsymbol{\omega_m}$. 

In the exponent in the last formula, we have the sum of i.i.d random variables on $\Omega$. Usually, we will assume that $\mathcal{E}\ln\displaystyle\frac{\beta}{p}(\cdot)<\infty$, but some exclusions from this assumption are also possible, we will discuss them later as well as the case $\mathcal{E}\displaystyle\ln^2\frac{\beta}{p}(\cdot)=\infty$. Now we will calculate the product $\Pi_t$,i.e. $\Phi_t(z,\boldsymbol{\omega}_m)=\underbrace{\varphi_1(\varphi_2\cdots\varphi_t(z,\boldsymbol{\omega}_m))}_{t-fold}$.

Let us introduce special $2\times 2$ matrices:

$\mathbf{\Sigma}_1=\begin{pmatrix}
-1 & 1\\
-1 & 1
\end{pmatrix}
$,
$\mathbf{\Sigma}_2=\begin{pmatrix}
1 & 0\\
1 & 0
\end{pmatrix}
$,
$\mathbf{\Sigma}_3=\begin{pmatrix}
1 & -1\\
0 & 0
\end{pmatrix}
$

The following table shows the multiplication results of any two matrices of the above. 

\begin{center}
\begin{tabular}{lllll}
                     & $\mathbf{\Sigma}_1$   & $\mathbf{\Sigma}_2$    & $\mathbf{\Sigma}_3$   \\
 $\mathbf{\Sigma}_1$ &  $\mathbf{O}$        & $\mathbf{O}$           & $\mathbf{\Sigma}_1$  \\
 $\mathbf{\Sigma}_2$ & $\mathbf{\Sigma}_1 $   & $\mathbf{\Sigma}_2$    & $-\mathbf{\Sigma}_1$  \\
 $\mathbf{\Sigma}_3$ &  $\mathbf{O}$         &  $\mathbf{O}$          & $\mathbf{O}$ 
\end{tabular}
\end{center}

where $\mathbf{O}$ is a $2\times2$ matrix with all elements 0.

One can decompose $\mathbf{A}_t$ into the following form:

\begin{equation}
\mathbf{A}_t=\begin{pmatrix}
-1 & 1\\
-1 & 1
\end{pmatrix}+
p_t\begin{pmatrix}
1 & 0\\
1 & 0
\end{pmatrix}+
\beta_t\begin{pmatrix}
1 & -1\\
0 & 0
\end{pmatrix}=
\mathbf{\Sigma}_1+p_t\mathbf{\Sigma}_2+\beta_t\mathbf{\Sigma}_3
\end{equation}

Then $\Pi_t=\mathbf{A}_1\mathbf{A}_2\cdots\mathbf{A}_t=a_t\mathbf{\Sigma}_1+b_t\mathbf{\Sigma}_2+c_t\mathbf{\Sigma}_3$

and
\begin{align*}
\Pi_{t+1}&=a_{t+1}\mathbf{\Sigma}_1+b_{t+1}\mathbf{\Sigma}_2+c_{t+1}\mathbf{\Sigma}_3\\
&=\Pi_t(\mathbf{\Sigma}_1+p_{t+1}\mathbf{\Sigma}_2+\beta_{t+1}\mathbf{\Sigma}_3)\\
&=(a_t\mathbf{\Sigma}_1+b_t\mathbf{\Sigma}_2+c_t\mathbf{\Sigma}_3)(\mathbf{\Sigma}_1+p_{t+1}\mathbf{\Sigma}_2+\beta_{t+1}\mathbf{\Sigma}_3)\\
&=(b_t+a_t\beta_{t+1}-b_t\beta_{t+1})\mathbf{\Sigma}_1+b_tp_{t+1}\mathbf{\Sigma}_2+c_t\beta_{t+1}\mathbf{\Sigma}_3
\end{align*}

It leads to the following iterated equations:

\begin{align}
a_{t+1}&=b_t+a_t\beta_{t+1}-b_t\beta_{t+1}\label{a_t}\\
b_{t+1}&=b_tp_{t+1}\label{b_t}\\
c_{t+1}&=c_t\beta_{t+1}\label{c_t}
\end{align}

From the (\ref{b_t}) and (\ref{c_t}), 

\begin{equation}\label{bt}
b_t=\prod\limits_{i=1}^tp_i
\end{equation}

\begin{equation}\label{ct}
c_t=\prod\limits_{i=1}^t\beta_i
\end{equation}

Substitute (\ref{bt}) and (\ref{ct}) into (\ref{a_t}), we can get 

\begin{equation}\label{a_final}
a_{t+1}=\prod\limits_{i=1}^tp_i+a_t\beta_{t+1}-\prod\limits_{i=1}^tp_i\beta_{t+1}
\end{equation}

Without loss of generality, we assume $a_1=1$ and $\beta_1=1$. 

We can get now the formula for $a_t$:

\begin{align}
a_t&=\prod\limits_{i=1}^{t-1}p_i(1-\beta_t)+\beta_t(1-\beta_{t-1})\prod\limits_{i=1}^{t-2}p_i+\beta_t\beta_{t-1}(1-\beta_{t-2})\prod\limits_{i=1}^{t-3}p_i\nonumber\\
&+\beta_t\beta_{t-1}\beta_{t-2}(1-\beta_{t-3})\prod\limits_{i=1}^{t-4}p_i+\cdots+\beta_t\beta_{t-1}\beta_{t-2}\cdots\beta_2 \label{at}
\end{align}

With (\ref{at}),(\ref{bt}) and (\ref{ct}), 

\begin{equation}
\mathbf{\Pi}_t=\begin{pmatrix}
-a_t+\displaystyle\prod\limits_{i=1}^tp_i+\displaystyle\prod\limits_{i=1}^t\beta_i & a_t-\displaystyle\prod\limits_{i=1}^t\beta_i\\
-a_t+\displaystyle\prod\limits_{i=1}^tp_i & a_t
\end{pmatrix}
=\begin{pmatrix}
A_t & B_t\\
C_t & D_t
\end{pmatrix}
\end{equation}

The determinant of $\mathbf{\Pi}_t$ is $det(\mathbf{\Pi}_t)=\displaystyle\prod\limits_{i=1}^tp_i\prod\limits_{i=1}^t\beta_i$

We know $\mathbf{\Pi}_t$ is the associated matrix for the probability generating function $\Phi_t(z,\omega)=\displaystyle\frac{A_tz+B_t}{C_tz+D_t}$. The probability of the population to extinct at time $t$ is 

\begin{equation}
\pi(t)=P(N_t=0)=\Phi_t(0,\omega)=\frac{B_t}{D_t}=1-\displaystyle\frac{\displaystyle\prod\limits_{i=1}^t\beta_i}{a_t}
\end{equation}

In other words, the survival probability of the population at time $t$ is 

\begin{equation}
P(N_t>0)=1-\pi(t)=\displaystyle\frac{\displaystyle\prod\limits_{i=1}^t\beta_i}{a_t}=\frac{1}{S_t}
\end{equation}

where $S_t=\displaystyle 1+\frac{1-\beta_1}{\beta_1}+\frac{1-\beta_2}{\beta_2}\frac{p_1}{\beta_1}+\cdots \frac{1-\beta_t}{\beta_t}\prod\limits_{j=1}^{t-1}\frac{p}{\beta}(j)$.

Using these notations, we can define another matrix associated with extinction probability:

$\widetilde{\Pi_t}=\displaystyle\frac{1}{a(t)}\Pi_t=\begin{pmatrix}
\rho-\pi & \pi \\
\rho-1   & 1 
\end{pmatrix}(t)
=\begin{pmatrix}
-1+\displaystyle\frac{\displaystyle\prod\limits_{i=1}^t \displaystyle\frac{p_i}{\beta_i}}{S_t}+\displaystyle\frac{1}{S_t} & 1-\displaystyle\frac{1}{S_t}\\
-1+\displaystyle\frac{\displaystyle\prod\limits_{i=1}^t\displaystyle\frac{p_i}{\beta_i}}{S_t}  & 1
\end{pmatrix}$

where $\rho(t)=\displaystyle\frac{\displaystyle\prod\limits_{i=1}^t p_i}{a_t}=\displaystyle\frac{\displaystyle\prod\limits_{i=1}^t \displaystyle\frac{p_i}{\beta_i}}{S_t}$

Thus 
\begin{equation}
\rho(t)=\displaystyle\frac{1}{\displaystyle\sum\limits_{k=1}^{t-1}\displaystyle\frac{1-\beta_{t-k+1}}{\beta_{t-k+1}} \prod\limits_{i=t-k+1}^{t}\displaystyle\frac{\beta_i}{p_i}+\prod\limits_{i=1}^t\frac{\beta_i}{p_i}}
\end{equation}

\begin{theorem}\label{distribution of N_t}
The probability that we have $k$ particles at time $t$ in the fixed random environment $\omega_t$ is $P(N_t=k)=(1-\pi(t))\rho(t)(1-\rho(t))^{k-1}$, $k\geq 1$.

\end{theorem}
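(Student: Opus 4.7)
The plan is to recognize that the theorem is a direct consequence of the two structural facts already established in the paper: (i) the probability generating function $\Phi_t(z,\boldsymbol{\omega_m}) = (A_t z + B_t)/(C_t z + D_t)$ is itself a M\"obius transform, since M\"obius transforms form a group under composition; and (ii) any M\"obius transform whose associated matrix can be put in the canonical form $\begin{pmatrix} p-\alpha & \alpha \\ p-1 & 1\end{pmatrix}$ is the generating function of the generalized geometric law given in equation (9) of the paper. So the task reduces to normalizing $\Pi_t$ into this canonical form and reading off its parameters.

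First, I would use the fact, already noted in the excerpt, that rescaling the associated matrix by a nonzero constant leaves the underlying M\"obius transform unchanged. Dividing $\Pi_t$ by $a_t$ produces the matrix $\widetilde{\Pi_t}$ displayed just before the theorem, which indeed has the canonical shape $\begin{pmatrix} \rho - \pi & \pi \\ \rho - 1 & 1\end{pmatrix}(t)$, with the bottom-right entry equal to $1$. This confirms that
$$\Phi_t(z,\boldsymbol{\omega_m}) = \frac{(\rho(t)-\pi(t))\,z + \pi(t)}{(\rho(t)-1)\,z + 1}.$$

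Second, I would identify the two canonical parameters. The entry $B_t/D_t = \pi(t)$ is already shown to equal the extinction probability $P(N_t = 0)$, so the parameter $\alpha$ in the canonical form is $\pi(t)$. The other parameter, $p$, must then be $\rho(t)$; this can be verified either by matching the $(2,1)$-entry of $\widetilde{\Pi_t}$ (which gives $\rho(t) - 1$) or by computing $\Phi_t'(1,\boldsymbol{\omega_m})$ and comparing with the canonical expression $(1-\alpha)/p$. In particular the formula for $\rho(t)$ in terms of $\prod p_i/\beta_i$ and the partial sums $S_t$ falls out of the explicit expressions for $a_t, b_t, c_t$ obtained in equations (\ref{at})-(\ref{ct}).

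Third, with the canonical form identified, I would simply invoke equation (9) of the paper, which gives the distribution associated with the generating function $((p-\alpha)z + \alpha)/((p-1)z+1)$: namely, $P(\xi = 0) = \alpha$ and $P(\xi = k) = (1-\alpha)p(1-p)^{k-1}$ for $k \geq 1$. Substituting $\alpha = \pi(t)$ and $p = \rho(t)$ yields exactly the formula claimed in the theorem for $k \geq 1$, and also recovers the extinction probability formula for $k = 0$ as a consistency check.

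There is essentially no hard step here; the work was already done in computing the entries of $\Pi_t$ and in observing that fractional linear generating functions are closed under composition. The only delicate point is bookkeeping, making sure that the normalization $a_t$ is nonzero (true in the nondegenerate case $(p_i,\alpha_i)\in(0,1)^2$) and that the parameters $\rho(t),\pi(t)$ lie in $(0,1)$ so that equation (9) is applicable; this follows from the fact that $\Phi_t$ is itself a probability generating function on $\{0,1,2,\ldots\}$, inherited from the underlying offspring distributions.
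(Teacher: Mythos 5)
Your proposal is correct and follows essentially the same route as the paper: both arguments reduce to the observation that $\Phi_t$ is a fractional linear generating function in the canonical form $\frac{(\rho-\pi)z+\pi}{(\rho-1)z+1}$ with parameters $\pi(t)$ and $\rho(t)$, and hence $N_t$ follows the generalized geometric law. The only difference is presentational — the paper re-derives the coefficients by expanding $\Phi_t(z)-\pi(t)$ as a geometric series in $z$, whereas you cite the previously stated correspondence between canonical-form M\"obius transforms and the law $P(\xi=0)=\alpha$, $P(\xi=k)=(1-\alpha)p(1-p)^{k-1}$, which is proved by the same expansion.
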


\begin{proof}
Recall $\Phi_t(z)$ is the probability generating function of $N_t$ under the fixed random environment.
$\Phi_t(z)=\displaystyle\frac{(\rho-\pi)z+\pi}{(\rho-1)z+1}=\sum\limits_{k=0}^{\infty}P(N_t=k)z^k$. This power series is convergent absolutely for all $|z|<1$.

Thus $\Phi_t(z)-\pi(t)=\displaystyle\frac{\rho(t)(1-\pi(t))z}{(\rho(t)-1)z+1}=\rho(t)(1-\pi(t))z\sum\limits_{n=0}^{\infty}(1-\rho(t))^nz^n$.

Thus $\Phi_t(z)=\pi(t)+\sum\limits_{n=0}^{\infty}(1-\pi(t))\rho(t)(1-\rho)^nz^{n+1}$

In other words, $P(N_t=k)=(1-\pi(t))\rho(t)(1-\rho(t))^{k-1}$, $k\geq 1$.

\end{proof}

\begin{theorem}
Assume
$\rho(t,\boldsymbol{\omega_m})=\displaystyle\frac{\displaystyle\prod\limits_{i=1}^t \displaystyle\frac{p_i}{\beta_i}}{S_t(\boldsymbol{\omega_m})}\xrightarrow[t\rightarrow \infty]{} 0$ $P_m-a.s.$

Then for $a\geq 0$

\begin{equation}
P\left(\frac{N(t)}{E[N_t|N_t\geq 1]}>a|N(t) \geq 1\right)\xrightarrow[t\rightarrow\infty]{} e^{-a}\,\,\, P_m-a.s.
\end{equation}
\end{theorem}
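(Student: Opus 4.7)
The plan is to reduce the assertion to the classical fact that a geometric random variable normalized by its mean converges to an $\operatorname{Exp}(1)$ distribution, using the explicit conditional law already computed in Theorem~\ref{distribution of N_t}.

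First, Theorem~\ref{distribution of N_t} gives, for every $k\ge 1$,
\[ P(N_t=k\mid N_t\ge 1)=\rho(t)(1-\rho(t))^{k-1}, \]
where I write $\rho(t)=\rho(t,\boldsymbol{\omega_m})$ for brevity. Thus, conditional on the environment and on the survival event, $N_t$ is geometric with parameter $\rho(t)$, with conditional mean $1/\rho(t)$. Hence the random variable appearing in the theorem is simply $\rho(t)N_t$, and it is enough to show that $P(\rho(t)N_t>a\mid N_t\ge 1)\to e^{-a}$ for $P_m$-a.e.\ $\boldsymbol{\omega_m}$.

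Second, the conditional tail telescopes:
\[ P\!\left(\rho(t)N_t>a\,\Big|\,N_t\ge 1\right)=\sum_{k>a/\rho(t)}\rho(t)(1-\rho(t))^{k-1}=(1-\rho(t))^{\lfloor a/\rho(t)\rfloor}. \]
The hypothesis $\rho(t,\boldsymbol{\omega_m})\to 0$ $P_m$-a.s., combined with the elementary identity $\log(1-x)/x\to -1$ as $x\to 0^+$, gives on the corresponding full-measure set of environments
\[ (1-\rho(t))^{a/\rho(t)}\longrightarrow e^{-a}. \]
Replacing the exponent $a/\rho(t)$ by its integer part $\lfloor a/\rho(t)\rfloor$ multiplies the expression by $(1-\rho(t))^{\theta(t)}$ with $\theta(t)\in[-1,0]$, which tends to $1$. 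Combining the two estimates yields the claimed quenched convergence.

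There is essentially no hard step here: Theorem~\ref{distribution of N_t} has already absorbed the entire fractional-linear structure of the environment into a clean geometric law, after which the argument is a one-line standard asymptotic. The only minor points to handle carefully are the integer correction $\lfloor a/\rho(t)\rfloor$ versus $a/\rho(t)$, and the fact that the convergence holds off a single $P_m$-null set of environments, which matches the stated $P_m$-a.s.\ quenched conclusion.
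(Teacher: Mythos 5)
Your proposal is correct and follows essentially the same route as the paper: both reduce the statement via Theorem~\ref{distribution of N_t} to the geometric conditional law with parameter $\rho(t)$, identify $E[N_t\mid N_t\ge 1]=1/\rho(t)$, and pass to the limit $(1-\rho(t))^{a/\rho(t)}\to e^{-a}$ on the full-measure set where $\rho(t)\to 0$. Your treatment of the integer-part correction is slightly more careful than the paper's, but the argument is the same.
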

 
\begin{proof}

$P(N_t=k|N_t\geq 1)=\frac{(1-\pi(t))\rho(t)(1-\rho(t))^{k-1}}{1-\pi(t)}=\rho(t)(1-\rho(t))^{k-1}$, $k\geq 1$. This is the pure geometric law. Thus $E[N_t|N_t\geq 1]=\displaystyle\frac{1}{\rho(t)}$ and as $\rho \xrightarrow[t\rightarrow\infty]{} 0$

\begin{align*}
P\left(\frac{N(t)}{E[N_t|N_t\geq 1]}>a|N(t) \geq 1\right)&=P(N_t>\frac{a}{\rho(t)})=(1-\rho(t))^{\frac{a}{\rho(t)}}\rightarrow e^{-a}
\end{align*}

\end{proof}
\section{Classification of Galton-Watson Process in Random Environment}

In the classical homogeneous theory, the classification of Galton-Watson process has three classes:
\begin{enumerate}[a)]

\item super-critical process if $E\xi=a>1$, where $\xi$ is the number of the offsprings $E[N_t]=a^t,a>1$, In other words, $E[N_t] \rightarrow \infty$ exponentially fast as $t\rightarrow \infty$

\item critical process if $E[N_t]=1$ 

\item sub-critical process if $E\xi<1$ (we assume that $E|\xi|<\infty$)

\end{enumerate}
Then see B. Sevastyanov \cite{Sevast}, we have the following classical results:

\begin{enumerate}
\item If $a>1$ then $\displaystyle\frac{N_t}{EN_t}=\frac{N_t}{a^t}\rightarrow N_{\infty}^*$ in law and $P(N_{\infty}^*=0)=\alpha<1$, $\alpha=\lim\limits_{t\rightarrow \infty}P(N_t=0)$
The extinction probability $\alpha$ is the single root of the equation $\alpha=\varphi(\alpha)$, $\alpha<1$.

\item If $E\xi=a=1$ (critical case), then $P(N_t=0)\rightarrow 1$ as $t\rightarrow \infty$ and $P(\displaystyle\frac{N_t}{t}>a|N_t\geq 1)\rightarrow e^{-ca}$ for an appropriate constant $c>0$. 

\item If $E\xi=a<1$ and for $c\in (a,1)$, $EN_t=a^t$,$P(N_t > c^t)\leq \frac{a^t}{c^t}$, $\sum\limits_t P(N_t\geq c^t)<\infty$, that is , population degenerates very fast. 
\end{enumerate}

Similar classfication for the Galton-Watson processes in the random environment is more complicated.

\subsection{Super-critical Galton-Watson process in random environment}
Like in the classical situation, we call the branching process $N_t(\boldsymbol{\omega_m})$ in the random environment supercritical if $\mathcal{E}(\ln \displaystyle\frac{\beta}{p})=\gamma>0$. Then
$E[N_t]=\prod\limits_{i=1}^t\displaystyle\frac{\beta_i}{p_i}=e^{\sum\limits_{i=1}^t\ln( \displaystyle\frac{\beta_i}{p_i})}=e^{t\mathcal{E}(ln  \displaystyle\frac{\beta}{p})+o(t)}$
due to the Strong Law of Large Numbers. That is 

\begin{equation}
\frac{\ln EN_t}{t}\xrightarrow[t\rightarrow \infty]{} \gamma=\mathcal{E}(\ln \displaystyle\frac{\beta}{p})>0\,\, P_m \,\,\text{a.s.}
\end{equation}

The additional classification depends on the $P(N_t=0), t \rightarrow \infty$. In the classical situation, $P(N_t=0)\xrightarrow[t\rightarrow \infty]{} \delta <1$ and $\delta$ is the root of the equation $\varphi(z)=z,z\in [0,1]$. 

In our case, consider now the event $\{N_t\geq 1\}$. Then 

$P(N_t\geq 1)=\displaystyle\frac{1}{S_t}$, $S_t=\displaystyle 1+\frac{1-\beta_1}{\beta_1}+\frac{1-\beta_2}{\beta_2}\frac{p_1}{\beta_1}+\cdots \frac{1-\beta_t}{\beta_t}\prod\limits_{j=1}^{t-1}\frac{p}{\beta}(j)$.

Obviously $P(N_t\geq 1)\rightarrow P(N_\infty\geq 1)=\displaystyle\frac{1}{S_{\infty}}$ as $t\rightarrow \infty$, where $S_{\infty}=\displaystyle 1+\frac{1-\beta_1}{\beta_1}+\frac{1-\beta_2}{\beta_2}\frac{p_1}{\beta_1}+\cdots \frac{1-\beta_t}{\beta_t}\frac{p_{t-1}p_{t-2}\cdots p_1}{\beta_{t-1}\beta_{t-2}\cdots \beta_1}+\cdots$.

Notation $\{N_{\infty}\geq 1\}$ means that the branching process $N_t$ is non-degenerating.

$P(N_{\infty}=0)=1-P(N_{\infty}\geq 1)=1-\displaystyle\frac{1}{S_{\infty}}$. 

When $\gamma=\mathcal{E}\ln \frac{\beta}{p}>0$, the behavior of the series for $S_{\infty}$ depends on the front coefficient $\displaystyle\frac{1-\beta_i}{\beta_i}, i=1,2,3\cdots$. 
\begin{theorem}
Assume that for some small enough $\delta>0$, 
\begin{equation}\label{supercondition}
\sum\limits_{t}\mathcal{P}(\beta_t<exp(-(\gamma-\delta)t)<\infty\,\, \text{and}\,\, \gamma=\mathcal{E}\ln\displaystyle\frac{\beta}{p}>0
\end{equation} 
then the process $N_t\rightarrow \infty$ exponentially fast with uniformly positive probability $P(N_t\geq 1)\geq \delta_1$ for some $\delta_1>0$
\end{theorem}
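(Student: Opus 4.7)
The plan is to prove that the condition (\ref{supercondition}) forces the series
\[
S_\infty(\boldsymbol{\omega_m}) = 1 + \sum_{k=1}^\infty \frac{1-\beta_k}{\beta_k}\prod_{j=1}^{k-1}\frac{p_j}{\beta_j}
\]
to converge $P_m$-almost surely. Because $P(N_t\geq 1)=1/S_t$ and $S_t$ is nondecreasing in $t$, the uniform lower bound $P(N_t\geq 1)\geq 1/S_\infty =:\delta_1(\boldsymbol{\omega_m})>0$ then follows for every $t$, almost surely in the environment. (Strictly speaking $\delta_1$ is a strictly positive random variable, not a universal constant; this appears to be the intended quenched reading of the theorem, since $S_\infty$ has unbounded support in general.)

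The argument has two ingredients. First I would apply Borel--Cantelli to the hypothesis: $\sum_t \mathcal{P}(\beta_t<e^{-(\gamma-\delta)t})<\infty$ implies that the events $\{\beta_t<e^{-(\gamma-\delta)t}\}$ occur only finitely often a.s., so there is a random threshold $T_0(\boldsymbol{\omega_m})<\infty$ with $\beta_t\geq e^{-(\gamma-\delta)t}$, and hence $(1-\beta_t)/\beta_t\leq 1/\beta_t\leq e^{(\gamma-\delta)t}$, for all $t\geq T_0$. Second, since the $\ln(p_j/\beta_j)$ are i.i.d.\ with mean $-\gamma<0$, the strong law of large numbers gives, for any fixed $\epsilon\in(0,\delta)$, a random $T_1(\boldsymbol{\omega_m})$ such that
\[
\prod_{j=1}^{k-1}\frac{p_j}{\beta_j}\leq e^{-(\gamma-\epsilon)(k-1)}\qquad\text{for all }k\geq T_1.
\]
Combining these bounds, for $k\geq T_0\vee T_1$ the $k$-th term of $S_\infty$ is at most $e^{(\gamma-\delta)k}\cdot e^{-(\gamma-\epsilon)(k-1)}=e^{\gamma-\epsilon}\cdot e^{-(\delta-\epsilon)k}$, so the tail decays geometrically and $S_\infty<\infty$ almost surely.

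For the ``exponentially fast'' part of the conclusion, I would invoke Theorem \ref{distribution of N_t}: conditional on survival, $N_t$ is geometric with conditional mean $1/\rho(t)=S_t\prod_{i=1}^t(\beta_i/p_i)$. Having shown that $S_t$ stays bounded by $S_\infty<\infty$ a.s., the SLLN gives $\prod_{i=1}^t(\beta_i/p_i)=e^{\gamma t+o(t)}$, so $E[N_t\mid N_t\geq 1]$ grows exponentially at rate $\gamma$ on the non-degenerate event, whose probability is uniformly bounded below by $\delta_1$. The essential obstacle is exactly the control of the prefactor $(1-\beta_k)/\beta_k$, which could be arbitrarily large when $\beta_k$ is close to zero; the summability assumption (\ref{supercondition}) is precisely calibrated so that this prefactor never overwhelms the exponential decay $e^{-\gamma k}$ coming from the SLLN, and without some such restriction $S_\infty$ could easily diverge despite $\gamma>0$.
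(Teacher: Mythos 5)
Your proof is correct and follows essentially the same route as the paper: Borel--Cantelli applied to the hypothesis to bound $\frac{1-\beta_t}{\beta_t}\leq e^{(\gamma-\delta)t}$ for all large $t$, combined with the strong law for $\prod_{j}(p_j/\beta_j)$ to conclude $S_\infty<\infty$ a.s.\ and hence $P(N_t\geq 1)\geq 1/S_\infty>0$. You actually spell out the SLLN step and the quenched reading of $\delta_1$ that the paper leaves implicit, which is a useful clarification but not a different argument.
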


\begin{proof}
In fact, due to the first Borel Cantelli Lemma, $\beta_t\geq exp(-(\gamma-\delta)t), \forall (\delta<\gamma), t\geq t_o(\boldsymbol{\omega}_m)$. That is $\frac{1-\beta_t}{\beta_t}\leq e^{(\gamma-\delta)t}
, t\geq t_o$ and the series $S_{\infty}$ converges. 

Due to Chebyshev' s inequality condition 
\begin{equation}
\mathcal{E}\ln^{1+\delta}\frac{1}{\beta}(\cdot)\leq c < \infty
\end{equation}

for some $\delta>0$ is sufficient for condition (\ref{supercondition}). 

\end{proof} 

We call such process the \textbf{strong super-critical Galton-Watson process}. 

%
%
%
%
%
%
%

\begin{theorem}
Assume that for some small enough $\delta>0$, 
\begin{equation}\label{weakcon}
\sum\limits_{t}\mathcal{P}(\beta_t<exp(-(\gamma+\delta)t)=\infty\,\, \text{and}\,\, \gamma=\mathcal{E}\ln\displaystyle\frac{\beta}{p}>0
\end{equation} 
then $P$-a.s. the process $N_t\rightarrow \infty$ exponentially fast and $P(N_t=0)\rightarrow 1$ as $t\rightarrow \infty$
\end{theorem}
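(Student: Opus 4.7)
\medskip
\noindent\emph{Proof plan.} The claim naturally splits into two pieces: the exponential growth of the quenched mean $E[N_t]$, and the convergence $P(N_t=0)\to 1$. These are logically independent and rely on different probabilistic inputs (the SLLN and the second Borel--Cantelli lemma respectively), so I would treat them separately.

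For the exponential growth, the quenched mean admits the explicit representation $E[N_t]=\prod_{i=1}^{t}\beta_i/p_i$ derived in Section 2. Applying the Strong Law of Large Numbers to the i.i.d.\ sequence $\{\ln(\beta_i/p_i)\}$, which has positive mean $\gamma=\mathcal{E}\ln(\beta/p)$, one obtains $t^{-1}\ln E[N_t]\to\gamma$ almost surely, so $E[N_t]$ grows exponentially at rate $\gamma$.

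For the extinction statement, recall $P(N_t=0)=1-1/S_t$ with
\[
S_t \;=\; 1 + \sum_{k=1}^{t}\frac{1-\beta_k}{\beta_k}\prod_{j=1}^{k-1}\frac{p_j}{\beta_j},
\]
so it suffices to prove $S_t\to\infty$ almost surely. The events $A_t=\{\beta_t<\exp(-(\gamma+\delta)t)\}$ are independent because the $\beta_t$ are i.i.d., and the hypothesis $\sum_t \mathcal{P}(A_t)=\infty$ together with the second Borel--Cantelli lemma yields $\mathcal{P}(A_t\text{ i.o.})=1$. Intersecting this full-measure event with the SLLN event, for $\mathcal{P}$-almost every environment one has simultaneously $\sum_{j=1}^{k-1}\ln(\beta_j/p_j)=\gamma k+o(k)$ and infinitely many indices $k$ at which $A_k$ holds. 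Along such $k$ we have $(1-\beta_k)/\beta_k\geq \frac{1}{2}\exp((\gamma+\delta)k)$ for all sufficiently large $k$, while $\prod_{j=1}^{k-1}(p_j/\beta_j)=\exp(-\gamma k+o(k))$. Multiplying shows the $k$-th summand of $S_t$ is at least $\frac{1}{2}\exp(\delta k+o(k))$, which tends to $\infty$, and since each summand is nonnegative this forces $S_t\to\infty$.

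The main---and essentially only---technical point is the simultaneous use of two almost-sure statements on the same environment: one must check that the full-measure event on which the SLLN holds and the full-measure event from Borel--Cantelli intersect in a set of full $\mathcal{P}$-measure for the given $\delta$. This is automatic, so I do not expect any genuine obstacle; the entire argument reduces to the explicit formulas of Section 2 combined with two classical limit theorems for i.i.d.\ sequences.
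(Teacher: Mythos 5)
Your proposal is correct and follows the same route the paper intends: the paper's proof is the single sentence that the claim is a corollary of the second Borel--Cantelli lemma applied to the independent $\beta_t$, and your argument is exactly that, fleshed out with the SLLN for $\ln E[N_t]=\sum_i\ln(\beta_i/p_i)$ and the lower bound $\frac{1-\beta_k}{\beta_k}\prod_{j<k}\frac{p_j}{\beta_j}\geq\frac{1}{2}e^{\delta k+o(k)}$ along the infinitely many indices where $\beta_k<e^{-(\gamma+\delta)k}$, which forces $S_t\to\infty$. Your reading of ``$N_t\to\infty$ exponentially fast'' as a statement about the quenched mean is the right one, since it is the only reading consistent with $P(N_t=0)\to 1$.
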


\begin{proof}
This is the corollary of the second Borel Cantelli Lemma for independent random variable $\beta_t$.
\end{proof}

We call such process the \textbf{weak super-critical Galton-Watson process}. 
Of course, one can prove more precise results considering the central limit theorems for the sums of $\displaystyle\sum\limits_{i=1}^t \ln\frac{\beta}{p}(j)$. 


\subsection{Sub-critical Galton-Watson process in random environment}
We call  Galton-Watson process in random environment sub-critical if $\mathcal{E}(\ln \displaystyle\frac{\beta}{p})=\gamma<0$, then $E[N_t]=\prod\limits_{i=1}^t\displaystyle\frac{\beta_i}{p_i}=e^{\sum\limits_{i=1}^t\ln( \displaystyle\frac{\beta_i}{p_i})}=e^{\gamma t+o(t)}$ and as easy to understand, $S_{\infty}=\infty$ P-a.s.

\begin{theorem}
If $\mathcal{E}(\ln \displaystyle\frac{\beta}{p}) <0$ (sub-critical case), then like in the classical homogeneous situation,  $E[N_t]\rightarrow 0$ as $t\rightarrow \infty$ exponentially fast, and $P(N_t\geq 1)\rightarrow 0$ exponentially fast.
\end{theorem}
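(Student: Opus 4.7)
The plan is to reduce both assertions to one fact, namely the almost sure exponential decay of the quenched expectation $E[N_t]$, which in turn is a direct consequence of the Strong Law of Large Numbers applied to the i.i.d.\ log-ratios $\ln(\beta_i/p_i)$.

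First I would record, as was already computed in Section 2, that for a fixed realization of the environment
\[
E[N_t] \;=\; \prod_{i=1}^{t}\frac{\beta_i}{p_i} \;=\; \exp\!\Bigl(\sum_{i=1}^{t}\ln\tfrac{\beta_i}{p_i}\Bigr).
\]
The summands are i.i.d.\ under $\mathcal{P}$ with common mean $\gamma<0$. By the SLLN, $\tfrac{1}{t}\sum_{i=1}^{t}\ln(\beta_i/p_i)\to\gamma$ $P_m$-a.s. Hence for any $\varepsilon\in(0,-\gamma)$ there exists $t_0(\boldsymbol{\omega_m})$ such that $E[N_t]\le e^{(\gamma+\varepsilon)t}$ for all $t\ge t_0$, and since $\gamma+\varepsilon<0$ this is the required exponential decay of $E[N_t]$ $P_m$-a.s.

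Second, since $N_t$ takes values in $\{0,1,2,\dots\}$ we have the elementary pointwise inequality $\mathbf{1}_{\{N_t\ge 1\}}\le N_t$. Taking the quenched expectation gives Markov's inequality in its sharpest form,
\[
P(N_t\ge 1) \;=\; E[\mathbf{1}_{\{N_t\ge 1\}}] \;\le\; E[N_t].
\]
Combined with the first step, this yields $P(N_t\ge 1)\le e^{(\gamma+\varepsilon)t}$ for $t$ large enough, $P_m$-a.s., proving the exponential decay of the survival probability. An equivalent route, if one wishes to avoid Markov, is to use the explicit formula $P(N_t\ge 1)=1/S_t$ together with the observation that the final summand of $S_t$ already equals $\tfrac{1-\beta_t}{\beta_t}\prod_{j=1}^{t-1}p_j/\beta_j$, whose product part grows like $e^{-\gamma t+o(t)}$ by the SLLN.

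There is no real obstacle here; the argument is essentially the observation that both $E[N_t]$ and $P(N_t\ge 1)$ are sandwiched by the same exponentially decaying SLLN estimate. The only minor point worth stating carefully is that the $o(t)$ in the SLLN approximation is controlled by an arbitrary $\varepsilon$, so that one can choose $\varepsilon$ small enough that $\gamma+\varepsilon<0$, which is exactly what makes ``exponentially fast'' meaningful. One could sharpen the rate using a large-deviation estimate for $\sum \ln(\beta_i/p_i)$, but this is not needed for the statement as given.
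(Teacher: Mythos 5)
Your proof is correct and follows essentially the same route as the paper: the paper obtains $E[N_t]=e^{\gamma t+o(t)}$ from the SLLN applied to $\sum_i\ln(\beta_i/p_i)$ and then deduces $P(N_t\geq 1)\leq E[N_t]$ from the Chebyshev (Markov) inequality. You merely make the $\varepsilon$-bookkeeping in the $o(t)$ term explicit, which the paper leaves implicit.
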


The last statement follows from Chebyshev inequality $P(N_t\geq 1)\leq E[N_t]=e^{\gamma t+o(t)}$.

Define a random variable $\tau(\omega_m)=min\{t:N_t=0\}$, this is the extinction moment. Its distribution is given by the formula

\begin{equation}
P(\tau>t)=P(N_t\geq 1)=\frac{1}{S_t}
\end{equation}

Then

\begin{equation}
E[\tau ]=\displaystyle \sum \limits_{t=0}^{\infty}P(\tau>t)=\sum\limits_{t=0}^{\infty}\frac{1}{S_t}
\end{equation}

Since $P(\tau>t)=P(N_t\geq 1)\leq E[N_t]\leq e^{(\gamma+\epsilon)t}$ for $t\geq t_0(\boldsymbol{\omega}_m)$ and any sufficiently small $\epsilon$. As a result, $E[\tau]=\sum\limits_{t=0}^{\infty}\frac{1}{S_t}<\infty$ and even $E[\tau^k]<\infty$ for arbitrary $k\geq 1$. 

In some sense for the subcritical Galton-Watson process in the random environment, the population is vanishing "very fast". 

\subsection{Critical Galton-Watson process in random environment}
The most interesting  are the critical processes in the random environment. We call  Galton-Watson process in random environment critical if $\gamma=\mathcal{E}(\ln \displaystyle\frac{\beta}{p})=0 $ and the second moment $\mathcal{E}(\ln^2 \displaystyle\frac{\beta}{p})=Var(\ln \displaystyle\frac{\beta}{p})=\sigma^2<\infty$. We will return to the case $\sigma^2=\infty$ in the next paper.

We will consider two special cases for $\sigma^2$.

\begin{enumerate}
\item Strong critical case: $\sigma^2=0$

That is $\beta(i)=p(i)$, $i=1,2,3,\cdots$  and $S_t=\displaystyle 1+\frac{1-\beta_1}{\beta_1}+\frac{1-\beta_2}{\beta_2}+\cdots \frac{1-\beta_t}{\beta_t}$

If $0<\gamma=\mathcal{E}\ln\frac{1-\beta}{\beta}<\infty$, then $S_t \sim \gamma t$ $P$-a.s.

and $P(N_t\geq 1)= \displaystyle\frac{1}{S_t}\sim\frac{1}{\gamma t}$. 

Like in the homogeneous case, $EN_t=1$, $P(N_t\geq 1)\sim \displaystyle\frac{1}{\gamma t}$. That is, population slowly degenerates.

\item Much more important case is the case when $\sigma^2>0$. 

Here $EN_t=\prod\limits_{i=1}^t\displaystyle\frac{\beta_i}{p_i}=e^{\sum\limits_{i=1}^t \ln \displaystyle\frac{\beta}{p}(i)}=e^{\xi(t)}$, where $\xi(t)=\sum\limits_{i=1}^t \ln \displaystyle\frac{\beta}{p}(i)$. Due to the Law  of the Iterated Logarithm, 

\begin{equation}
\limsup \frac{\xi(t)}{\sigma \sqrt{t\ln\ln t}}=1
\end{equation}

\begin{equation}
\liminf \frac{\xi(t)}{\sigma \sqrt{t\ln\ln t}}=-1
\end{equation}
\end{enumerate}

That is, the sum $\xi(t)$ in the exponent has strong oscillations.

\begin{theorem}
If $\mathcal{E}(ln  \displaystyle\frac{\beta}{p})=0$  and $0<\mathcal{E}(\ln ^2\displaystyle\frac{\beta}{p})=\sigma^2<\infty$,  $E[N_t]$ will grow in an oscillating way. 
\end{theorem}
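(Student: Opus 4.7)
The plan is to derive the theorem directly from the Law of the Iterated Logarithm displayed immediately above the statement, with essentially no additional analytic work. First I would recall from Section~2 that under any fixed environment realization the quenched mean is
\[
E[N_t] = \prod_{i=1}^t \frac{\beta_i}{p_i} = \exp\bigl(\xi(t)\bigr),\qquad \xi(t) = \sum_{i=1}^t \ln\tfrac{\beta}{p}(i),
\]
so that under the hypotheses $\mathcal{E}\ln(\beta/p)=0$ and $\mathcal{E}\ln^2(\beta/p)=\sigma^2\in(0,\infty)$, the exponent $\xi(t)$ is a zero-mean random walk with i.i.d.\ increments having positive finite variance. The whole question therefore reduces to the asymptotic oscillation of $\xi(t)$.

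Next I would invoke the classical Hartman--Wintner Law of the Iterated Logarithm, which is precisely the pair of displays stated just before the theorem. Together these give, $\mathcal{P}$-a.s.,
\[
\limsup_{t\to\infty}\xi(t) = +\infty,\qquad \liminf_{t\to\infty}\xi(t) = -\infty,
\]
since the $\limsup$ statement forces $\xi(t) \geq (1-\varepsilon)\sigma\sqrt{t\ln\ln t}$ infinitely often and the $\liminf$ statement forces the analogous lower bound with the opposite sign infinitely often. By continuity and monotonicity of the exponential, the oscillation transfers directly to $E[N_t]$:
\[
\limsup_{t\to\infty} E[N_t] = +\infty,\qquad \liminf_{t\to\infty} E[N_t] = 0 \quad P_m\text{-a.s.}
\]
This is the rigorous content behind the informal phrase ``grow in an oscillating way,'' and I would state it explicitly as the conclusion of the proof.

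As for the main obstacle: there really isn't one at the level of the theorem as stated, because all the analytic content is absorbed into the LIL, which is used as a black box. The only subtle point is one of formulation rather than proof, namely that the verbal statement ``oscillates'' must be upgraded to a precise a.s.\ claim about the $\limsup$ and $\liminf$ of $E[N_t]$. A genuinely harder question, which I would flag but not attempt here, is to control the oscillation not of the quenched expectation $E[N_t]$ but of the process $N_t$ itself; for that one would need to combine the LIL with the explicit formulas for $\rho(t)$ and $\pi(t)$ from Theorem~\ref{distribution of N_t}, and the argument is no longer a one-line consequence of LIL.
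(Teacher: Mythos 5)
Your proof is correct, but it takes a different route from the one the paper actually uses. The paper's own proof invokes the Lindeberg--L\'evy central limit theorem to get $\frac{1}{\sqrt{t}}\xi(t)\to N(0,\sigma^2)$ in distribution and then asserts $E[N_t]\sim e^{\sqrt{t}w}$ for a normal $w$, reading off the oscillation from that. You instead use the two Law of the Iterated Logarithm displays that the paper states just \emph{before} the theorem (but then does not use in its proof), obtaining $\limsup_t \xi(t)=+\infty$ and $\liminf_t \xi(t)=-\infty$ almost surely and hence $\limsup_t E[N_t]=+\infty$, $\liminf_t E[N_t]=0$ $P_m$-a.s. Your version is arguably the stronger and cleaner one: ``oscillates'' is naturally a pathwise statement, and the LIL delivers an almost-sure claim about a single realization of the environment, whereas the CLT only describes the limiting distribution of the normalized exponent at a fixed large $t$ (and the paper's ``$E[N_t]\sim e^{\sqrt{t}w}$'' is really shorthand for a distributional statement, not an a.s.\ asymptotic). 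What the CLT route buys in exchange is quantitative information about the \emph{typical} size of the fluctuations, namely that $\ln E[N_t]$ is of order $\sqrt{t}$ in distribution, which the bare $\limsup/\liminf$ statement does not give. One cosmetic remark: the paper's LIL displays omit the usual factor $\sqrt{2}$ in the normalization $\sqrt{2t\ln\ln t}$; since you only use that the $\limsup$ and $\liminf$ of $\xi(t)$ are $\pm\infty$, this does not affect your argument.
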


\begin{proof}

If $\mathcal{E}(ln  \displaystyle\frac{\beta}{p})=0$, and $\mathcal{E}(ln  \displaystyle\frac{\beta}{p})^2> <\infty$, by Lindeberg–Lévy Central Limit Theorem, $\frac{1}{\sqrt{t}}\sum\limits_{i=1}^t\ln( \displaystyle\frac{\beta_i}{p_i})\rightarrow N(0,1)$ in distribution. Thus $E[N_t]\sim e^{\sqrt{t}w}$ as $t\rightarrow \infty$ where $w$ is a standard random normal variable. This implies the expected number of particles at moment t will oscillates as $t$ goes to infinity.
\end{proof}

Consider now the series $S_t=\displaystyle 1+\frac{1-\beta_1}{\beta_1}+\frac{1-\beta_2}{\beta_2}e^{\ln \frac{p_1}{\beta_1}}+\cdots \frac{1-\beta_t}{\beta_t}\prod\limits_{j=1}^{t-1}e^{\ln \frac{p}{\beta}(j)}$

If the random variables $\beta_t,t=1,2,3\cdots$ are separated from 1 in some sense, say $\beta_t\leq 1-\delta_1$ for any $t\geq 0$, then from the previous formula, it follows that $S_t \rightarrow \infty$, $t\rightarrow \infty$ $P$-a.s. But it is true without additional assumptions. 

\begin{theorem}\label{Sinfcrit}
If $\gamma=\mathcal{E}\ln\frac{\beta}{p}=0$, $\sigma^2=\mathcal{E}\ln^2\frac{\beta}{p}<\infty$, then $S_{\infty}=\infty$, that is $P(N_{\infty}=0)=1$, in other words, $N_t=0$, for $t\geq \tau(\omega)<\infty$
\end{theorem}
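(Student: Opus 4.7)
The plan is to exhibit infinitely many terms in the series
$$S_\infty - 1 = \sum_{k=1}^{\infty} T_k, \qquad T_k := \frac{1-\beta_k}{\beta_k}\, e^{-\xi(k-1)}, \qquad \xi(n) := \sum_{j=1}^{n}\ln\frac{\beta_j}{p_j},$$
that are bounded below by a single fixed positive constant; this forces $S_\infty = \infty$ $P_m$-almost surely. Here $\xi(n)$ is the random walk with i.i.d.\ increments of mean $\gamma = 0$ and variance $\sigma^2 < \infty$.

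If $\sigma^2 = 0$ then $\ln(\beta/p) = 0$ a.s., so $\xi \equiv 0$ and the $T_k = (1-\beta_k)/\beta_k$ are strictly positive i.i.d.\ (since $\alpha_k \in (0,1)$), giving $\sum T_k = \infty$ immediately. Henceforth assume $\sigma^2 > 0$. Then by the Chung--Fuchs recurrence theorem (or equivalently by the law of the iterated logarithm already invoked in the paper), $\liminf_n \xi(n) = -\infty$ $P_m$-a.s. Fix any $M > 0$ and let $\tau_i$ denote the $i$-th index $k \geq 1$ such that $\xi(k-1) < -M$. Each $\tau_i - 1$ is an a.s.\ finite stopping time of the filtration $\mathcal{F}_n := \sigma((\alpha_j, p_j) : j \leq n)$.

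The second ingredient is a fresh sample at each $\tau_i$. Pick $c > 0$ so small that $q := \mathcal{P}(\alpha/(1-\alpha) > c) > 0$; such $c$ exists because $\alpha \in (0,1)$ a.s.\ in the non-degenerate model. By the strong Markov property applied to the i.i.d.\ sequence $(\alpha_k, p_k)_{k\geq 1}$, the vector $(\alpha_{\tau_i}, p_{\tau_i})$ is independent of $\mathcal{F}_{\tau_i - 1}$ and has the law of $(\alpha_1, p_1)$, so
$$\mathcal{P}\bigl(\alpha_{\tau_i}/(1-\alpha_{\tau_i}) > c \mid \mathcal{F}_{\tau_i - 1}\bigr) = q \quad \text{a.s.}$$
The conditional (L\'evy) Borel--Cantelli lemma then gives $\alpha_{\tau_i}/(1-\alpha_{\tau_i}) > c$ for infinitely many $i$, almost surely. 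On each such $i$ one has $T_{\tau_i} \geq c\, e^{M}$, hence $S_\infty = \infty$, and the identity $P(N_\infty \geq 1) = 1/S_\infty$ derived earlier yields $P(N_\infty = 0) = 1$ $P_m$-a.s., with a.s.\ finite extinction time.

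The main technical point is marrying the recurrence of $\xi$ (a property of the log-ratios $\ln(\beta_j/p_j)$) with the independence of $\alpha_{\tau_i}$ from $\mathcal{F}_{\tau_i - 1}$ at the random indices $\tau_i$. The strong Markov property for the joint i.i.d.\ environment $(\alpha_k, p_k)$ is exactly the tool that makes this work, after which conditional Borel--Cantelli amplifies a single dip of $\xi$ below $-M$ into infinitely many terms $T_k \geq c\, e^{M}$.
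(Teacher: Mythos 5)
Your proof is correct, and it takes a genuinely different route from the paper's. You both rely on the same two ingredients: the mean-zero, finite-variance walk $\xi(n)=\sum_{j\le n}\ln(\beta_j/p_j)$ dips below any level infinitely often, and the prefactors $\frac{1-\beta_k}{\beta_k}=\frac{\alpha_k}{1-\alpha_k}$ exceed some $c>0$ with positive probability, so infinitely many terms of $S_\infty$ stay bounded away from $0$. The paper implements this with an explicit block construction ($A_n=2^{n^2}$, $B_n=2^n$, $C_n=\sum_k(A_k+B_k)$), Kolmogorov's maximal inequality to control $\max_{k\le C_n}\xi(k)$, a CLT-type statement that a fresh block of length $A_{n+1}\gg C_n$ produces a large excursion with probability near $\tfrac12$, and a separate Borel--Cantelli argument locating indices with $\frac{1-\beta_t}{\beta_t}\ge\delta_1$ inside the good blocks. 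Your argument replaces all of that bookkeeping with recurrence (Chung--Fuchs or the LIL) to get infinitely many dips $\xi(\tau_i-1)<-M$, the strong Markov property of the i.i.d.\ environment to see that $(\alpha_{\tau_i},p_{\tau_i})$ is a fresh sample independent of $\mathcal{F}_{\tau_i-1}$, and L\'evy's conditional Borel--Cantelli to get $\alpha_{\tau_i}/(1-\alpha_{\tau_i})>c$ infinitely often. The crucial subtlety --- that the prefactor must be evaluated at an index strictly after the data determining the dip, since $T_k=\frac{1-\beta_k}{\beta_k}e^{-\xi(k-1)}$ couples index $k$ with the walk only up to $k-1$ --- is exactly what your stopping-time formulation makes transparent, and is handled only implicitly (and somewhat telegraphically) in the paper. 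Your version is shorter, covers the degenerate case $\sigma^2=0$ explicitly, and avoids the paper's asymptotic-equality-written-as-equality issues (e.g.\ the displayed $P(\xi(C_n+A_{n+1})-\xi(C_n)>\sqrt{A_{n+1}})=\tfrac12$, which holds only in the CLT limit); what the paper's block construction buys in exchange is quantitative control on \emph{where} the large terms occur, which feeds into the subsequent asymptotic analysis of $\ln S_t$.
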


\begin{proof}
Consider the sequences $A_n=2^{n^2}$, $B_n=2^n$ and $C_n=\sum\limits_{k=1}^n(A_k+B_k)$, then $C_n \sim 2^{n^2}$, $n\rightarrow \infty$. 

For $\xi(t)=\sum\limits_{j=1}^t\ln\displaystyle\frac{\beta}{p}(j)$, by Kolmogorov's inequality 

\begin{equation}
P(\max\limits_{k\leq C_n} \xi(k)\geq \sqrt{C_n}D_n)\leq \frac{\sigma^2}{D^2_n}
\end{equation}

and for $D_n=n$, due to the Borel Cantelli Lemma, we will get that 

\begin{equation}
\max\limits_{k\leq C_n} \xi(k) \leq \sqrt{C_n}n=2^{n^2/2}n\,\, \text{for}\,\, n\geq n_0(\boldsymbol{\omega_m)}
\end{equation}

And at the same time
\begin{equation}\label{50chance}
P(\xi(C_n+A_{n+1})-\xi(C_n)>\sqrt{A_{n+1}})=\frac{1}{2}
\end{equation}

and $A_{n+1}=2^{(n+1)^2}>>C_n$. As a result, there is about 50\% of $n$ for which we have the inequality in (\ref{50chance}). Here and only here, we used the assumption $A_n\sim c^n\sim 2^{n^2}$ (instead of $n^2$, one can use $n^{1+\epsilon}$, $\epsilon>0$.)

Finally, by Kolmogorov's inequality,

\begin{equation}
P(\max\limits_{k<B_{n(t)}}|\sum\limits_{j=C_n+A_{n+1}}^{C_n+A_{n+1}+k} \ln \frac{\beta}{p}(j)|>B_n^{\frac{1}{2}(1+\epsilon)} \leq \frac{C}{2^{n\epsilon}}
\end{equation}

Now we have to estimate the random variables $\frac{1-\beta_t}{\beta_t}$, $t=1,2,3,\cdots$ since very small values of these random variables can compensate the large values of $e^{-\xi(t)}=e^{\sum\limits_{j=1}^t \ln\frac{p}{\beta}}$. 

For some $\delta_1,\delta_2>0$, 

\begin{equation}
P(\frac{1-\beta_t}{\beta_t}\geq \delta_1)=P(\beta_t\leq \frac{1}{1+\delta_1})\geq \delta_2
\end{equation}

Consider the sequence $\frac{1-\beta_t}{\beta_t}, t\geq 1$ and divide it into series $\eta$ which collects all terms of $\frac{1-\beta_t}{\beta_t}<\delta_1$ and complementary series $\theta$ which collects all the terms $\frac{1-\beta_t}{\beta_t}\geq \delta_1$. The distribution of the length of such series is the geometric distribution and due to the Borel Cantelli Lemma $\max\limits_{i\leq N}\theta_i\sim c\ln N$ $P$-a.s.

The number of $\theta$-series between $C_n+A_{n+1}$ and $C_{n+1}$ again due to Borel-Cantelli lemma is growing exponentially in $n$ $P$-a.s., that is, inside the interval $(C_n+A_{n+1},C_{n+1})$, there are many random variables $\frac{1-\beta_t}{\beta_t}\geq \delta_1$. But in this interval,

\begin{equation}
|\eta(C_n+A_{n+1}+k)|\geq 2^{n^2/2}-2^{-(n+1)^{(1+\epsilon)/2}}
\end{equation}

if only $\displaystyle\eta(C_n+A_{n+1})-\eta(C_n)>2^{(n+1)^2/2}$. This observation complete the proof of Theorem (\ref{Sinfcrit})
\end{proof}
Let us now estimate $S_t$. Note that the main contribution to $S_t$ can give to the factors $\prod\limits_{i=1}^s\displaystyle\frac{p}{\beta}(i)=e^{\sqrt{s}\gamma+o(s)}$ but prefactors $\displaystyle\frac{p}{\beta}(s)$ can be very small if only $\beta$ can be very large,. Under appropriate control of the large values of $\displaystyle\frac{1-\beta}{\beta}$, we can find the asymptotic formula. 

\begin{theorem}
If for any $\delta_0\geq 0$, $\mathcal{E}|\ln\frac{1-\beta}{\beta}|^{2+\delta}=\mathcal{E}|\ln\frac{\beta}{1-\beta}|^{2+\delta}=c_0<\infty$, then

\begin{equation}
\frac{1}{\sigma\sqrt{t}}\ln S_t \xrightarrow[law]{} M_1
\end{equation}

where $M_1=\max \limits_{s\in [0,1]} W_s$ and $W_s$ is the standard 1D Brownian motion. 

\end{theorem}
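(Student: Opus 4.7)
The strategy is to exhibit $\ln S_t$ as (essentially) $\max_{k\le t}(-S^*_{k-1})$, where $S^*_k=\sum_{j=1}^{k}\xi_j$ with $\xi_j=\ln(\beta_j/p_j)$ is the centered i.i.d.\ random walk governing $\ln\mathcal{E} N_t$; Donsker's invariance principle then transfers the problem to a functional of Brownian motion. Concretely, writing $\eta_k=\ln\frac{1-\beta_k}{\beta_k}$ and $T_k=\frac{1-\beta_k}{\beta_k}\prod_{j=1}^{k-1}\frac{p_j}{\beta_j}=\exp(\eta_k-S^*_{k-1})$, we have $S_t=1+\sum_{k=1}^{t}T_k$ with each $T_k\ge 0$. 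The trivial sandwich
$$\max_{1\le k\le t}T_k\ \le\ S_t\ \le\ 1+t\max_{1\le k\le t}T_k$$
yields $\ln S_t=\max_{k\le t}\ln T_k+O(\ln t)$, and the $O(\ln t)$ term is negligible on the scale $\sigma\sqrt{t}$.

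Using the elementary inequality $|\max_k(a_k+b_k)-\max_k a_k|\le\max_k|b_k|$ with $a_k=-S^*_{k-1}$ and $b_k=\eta_k$, the proof reduces to showing that $\max_{1\le k\le t}(-S^*_{k-1})/(\sigma\sqrt{t})$ converges in law to $M_1$, while $\max_{1\le k\le t}|\eta_k|/\sqrt{t}$ tends to $0$ in $\mathcal P$-probability. The first is Donsker plus continuous mapping: the polygonal rescaling $s\mapsto S^*_{\lfloor ts\rfloor}/(\sigma\sqrt{t})$ converges weakly in $C[0,1]$ to standard Brownian motion $W$, the functional $f\mapsto-\min_{s\in[0,1]}f(s)$ is continuous in the sup norm, and $-\min_{s\in[0,1]}W_s\stackrel{d}{=}\max_{s\in[0,1]}W_s=M_1$ by the reflection symmetry of Brownian motion. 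The second estimate is a Markov/union bound using the hypothesis $\mathcal{E}|\eta|^{2+\delta}=c_0<\infty$ directly:
$$\mathcal{P}\!\left(\max_{1\le k\le t}|\eta_k|>\epsilon\sqrt{t}\right)\ \le\ t\cdot\frac{c_0}{(\epsilon\sqrt{t})^{2+\delta}}\ =\ O\!\left(t^{-\delta/2}\right)\ \longrightarrow\ 0.$$
Combining the two gives $\ln S_t/(\sigma\sqrt{t})\Rightarrow M_1$.

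The main obstacle, as already anticipated in the discussion preceding the theorem, is controlling the prefactors $\eta_k$: a single $\beta_k$ very close to $1$ (hence $\eta_k$ very negative) or very close to $0$ (hence $\eta_k$ very positive) could in principle perturb $\ln S_t$ on the $\sqrt{t}$ scale and destroy the clean Brownian-maximum limit. The two-sided $(2+\delta)$-moment assumption on $\ln\frac{1-\beta}{\beta}$ and on $\ln\frac{\beta}{1-\beta}$ is tailored precisely to make the preceding union bound vanish; a pure second moment would leave an $O(1)$ term in the Markov estimate and would not suffice. Everything else, namely the Donsker invariance principle, the $O(\ln t)$ absorption, and the monotone sandwich, is standard.
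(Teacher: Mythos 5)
Your argument is correct and follows essentially the same route as the paper: sandwich $S_t$ between its largest term and $t$ times that term, split $\ln T_k$ into the random walk $-S^*_{k-1}$ plus the prefactor $\eta_k$, kill the prefactors using the $(2+\delta)$-moment hypothesis, and finish with Donsker's invariance principle and the reflection identity $-\min W \stackrel{d}{=}\max W$. The only (immaterial) difference is that you control $\max_{k\le t}|\eta_k|$ by a union bound in probability, whereas the paper uses Chebyshev plus Borel--Cantelli to get an almost-sure bound.
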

\begin{proof}

By the Chebyshev inequality, for any $\epsilon >0$, 
\begin{equation}
P(|\ln \frac{1-\beta_t}{\beta_t}|\geq \epsilon \sqrt{t}) \leq \frac{\mathcal{E}(\ln\frac{1-\beta_t}{\beta_t})^{2+\delta}}{\epsilon^{2+\delta}t^{1+\delta/2}}
\end{equation}

By Borel Cantelli Lemma, for any $\epsilon>0 $, there exists $t_0>0$ such that when $t\geq t_0$, $\displaystyle\frac{1}{\sqrt{t}}|\ln \frac{1-\beta_t}{\beta_t}|<\epsilon$, that is 
\begin{align}\label{first sum bound}
e^{-\epsilon t^{\frac{1}{•2}}}<\frac{1-\beta_t}{\beta_t}<e^{\epsilon t^{\frac{1}{2}}}
\end{align}

Put $M_t=\max\limits_{s\leq t}\frac{\sum\limits_{i=1}^s \ln\frac{p}{\beta}(i)}{\sigma\sqrt{t}}$.

Then for large $t$, 

\begin{equation}
e^{-\epsilon \sqrt{t}}e^{\sigma\sqrt{t}M_t}\leq S_t\leq e^{\sigma \sqrt{t}}te^{\delta \sqrt{t}}
\end{equation}

That is 

\begin{equation}
M_t-\epsilon \leq \frac{\ln S_t}{\sigma \sqrt{t}}   \leq  \frac{\ln t}{\sigma \sqrt{t}}+\epsilon+M_t
\end{equation}

Due to the functional central limit theorem by Donsker Prokhorov, as $t\rightarrow \infty$,

\begin{equation}
\frac{1}{\sigma\sqrt{t}}\ln S_t \rightarrow M_t \xrightarrow[law]{} M_1
\end{equation}

where $M_1=\max \limits_{s\in [0,1]} W_s$ and $W_s$ is the standard 1D Brownian motion. 

\end{proof}

\section{Analysis of the Geometric Progression}
In the last section of the paper, we will give two examples when the distribution of the random geometric progression can be calculated explicitly. 

Let $S_{\infty}=\displaystyle\frac{1-\beta_1}{\beta_1}+\frac{1-\beta_2}{\beta_2}\frac{p_1}{\beta_1}+\cdots$ and $X_t=\displaystyle\frac{1-\beta_t}{\beta_t}$ and $\theta_t=\displaystyle\frac{p_t}{\beta_t}$. Assume that 
$X_t \in (0,1)$ and $\theta_t\in(0,1)$ are independent for fixed $t$ and the pairs $(X_t,\theta_t)$ are independent for different $t=2,3,4,\cdots$. 

Then
\begin{equation}\label{S_inf}
S_{\infty}=X_1+\theta_1S_{\infty}'
\end{equation}

where in the right part all three random variables are independent and $S_{\infty}\stackrel{law}{=}S_{\infty}'$. The random geometric series (\ref{S_inf}) appears in many applications, see for example \cite{cell}. 

We will present now two typical examples. For the general discussion on the properties of the sum in (\ref{S_inf}) about infinite divisibility, asymptotes etc., we will present in a separate paper.

\subsection{Example}
Let $X_t, t=1,2,3,\cdots$ have Erlang law with the index $r=1,2,\cdots$, i.e. the Laplace transform of $X_t$ is

\begin{equation}
Ee^{-\lambda X_t}=\frac{1}{(1+\lambda)^r}=\Phi(\lambda)
\end{equation}

And $\theta_t, t=1,2,3,\cdots$ have the degenerated $\beta$-density:

\begin{equation}
\pi(x)=\beta x^{\beta-1}\mathbf{1}_{[0,1]}(x),\beta>0
\end{equation} 

Then $\psi(\lambda)=Ee^{-\lambda S_{\infty}}=\Phi(\lambda)\displaystyle\int_0^1\psi(\lambda \xi)\pi(\xi)d \xi=\frac{1}{\lambda^{\beta}}\Phi(\lambda)\int_0^{\lambda} \psi(y) \beta y^{\beta-1}dy$

Thus 

\begin{equation}
\frac{\psi(\lambda)\lambda^{\beta}}{\Phi(\lambda)}=\beta\int_0^{\lambda}\psi(y)y^{\beta-1}dy
\end{equation}

\begin{equation}
(\frac{\psi}{\Phi})'\lambda^{\beta}+\frac{\psi}{\Phi}\beta\lambda^{\beta-1}=\beta\psi(\lambda)\lambda^{\beta-1}
\end{equation}

\begin{equation}
\psi(\lambda)=\Phi(\lambda)\displaystyle e^{-\beta\int_0^{\lambda}\frac{1-\Phi(s)}{s}ds}
\end{equation}

Or 
\begin{align*}
\psi(\lambda)&=\frac{1}{(1+\lambda)^r}\displaystyle e^{-\beta\int_0^{\lambda}\frac{(1+z)^r-1}{z(1+z)^r}dz}\\
&=\frac{1}{(1+\lambda)^r}\displaystyle e^{-\beta\int_0^{\lambda}\frac{1+(1+z)+\cdots+(1+z)^{r-1}}{(1+z)^r}dz}\\
&=\frac{1}{(1+\lambda)^{r}}\displaystyle e^{-\beta(\ln(1+\lambda)+(\frac{1}{1+\lambda}-1)+\frac{1}{2}(\frac{1}{(1+\lambda)^2}-1)+\cdots+\frac{1}{r-1}(\frac{1}{(1+\lambda)^{r-1}}-1))}\\
&=\frac{1}{(1+\lambda)^{r+\alpha}}\displaystyle e^{-\beta(\frac{1}{1+\lambda}-1)+\frac{1}{2}(\frac{1}{(1+\lambda)^2}-1)+\cdots+\frac{1}{r-1}(\frac{1}{(1+\lambda)^{r-1}}-1))}
\end{align*}

The last Laplace transform means that 
\begin{equation}
S_{\infty}=Y_1+(Z_{11}+\cdots Z_{1v_1})+(Z_{21}+\cdots Z_{2v_2})+\cdots+(Z_{r-1,1}+\cdots+Z_{r-1,v_{r-1}})
\end{equation}
where $Y_1$ has Gamma distribution with parameter $r+\alpha$, and $v_1,v_2,\cdots,v_{r-1}$ has Poissonian laws with parameter $\beta,\beta/2,\cdots,\beta/(r-1)$ respectively. And $Z_{l,j},l=1,2,\cdots, r-1, j=1,2,3,\cdots, r-1$ have Erlang laws of the rank $l$ and the number of the terms in each sum is $v_{l}$. 

In particular, if $r=1$, $\psi(\lambda)=\displaystyle\frac{1}{1+\lambda}e^{-\beta\int_0^{\lambda}\frac{1-\frac{1}{1+s}}{s}ds}=\frac{1}{(1+\lambda)^{1+\beta}}$,
thus $S_{\infty}$ has Gamma distribution with parameter $1+\beta$

The density of random variable $S_{\infty}$ has power asymptotics near 0. Note that the random variable $\theta_t,t=1,2,3,\cdots$ have positive density on $(0,1)$. The answer will be different if $\theta_t$ are separated from 0 and 1. Assume $\theta_t\equiv \rho \in (0,1)$ and consider the random geometric progression $S=\sum\limits_{n=0}^{\infty}X_n\rho^n$ where $0<\rho<1$ is a fixed number and $X_n,n=1,2,3\cdots$ have i.i.d Gamma distribution with parameter $\beta>0$.

Then  $E^{-\lambda X_i}=\displaystyle\int_0^{\infty}e^{-\lambda x}\frac{x^{\beta-1}e^{-x}}{\Gamma(\beta)}dx=\frac{1}{(1+\lambda)^{\beta}}$

We want to estimate $P(S<\delta)$, $\delta<<1$ and $P(S>A)$, $A>>1$. 

Note that 

\begin{equation}
Ee^{-\lambda S}=\frac{1}{(1+\lambda)^{\beta}(1+\rho\lambda)^{\beta}\cdots (1+\lambda\rho^n)}=\psi(\lambda)
\end{equation}

The function $\psi(\lambda)$ is analytic for $\lambda>-1$ and using this fact or the exponential Chebyshev's equality, one can find that $P(S>A)\sim cA^{\beta-1}e^{-\beta}$. Much more interesting problem is the estimation of $P(S<\delta)$. 

We have $P(S<\delta)=P(e^{-\lambda S}>e^{-\lambda \delta})\leq \frac{Ee^{-\lambda S}}{e^{-\lambda \delta}}=\psi(\lambda)e^{\lambda \delta}$

But $\psi(\lambda)=\prod\limits_{n=1}^{\infty}(1+\rho^n\lambda)^{-\beta}=\prod\limits_{n=1}^{n_0}(1+\rho^n\lambda)^{-\beta}\prod\limits_{n=n_0+1}^{\infty}(1+\rho^n\lambda)^{-\beta}$.

If $\rho^{n_0}\lambda \simeq 1$, then $\ln \lambda+n_0\ln \rho =0$, then $n_0\sim \displaystyle\frac{\ln \lambda}{\ln \frac{1}{\rho}}$

As easy to see, for $\lambda>>1$, 

$\psi(\lambda)=\prod\limits_{n=1}^{n_0}(1+\rho^n\lambda)^{-\beta}\prod\limits_{n=n_0+1}^{\infty}(1+\rho^n\lambda)^{-\beta}\sim C\lambda^{n_0}\rho^{n_0(n_0-1)/2}\sim C\lambda^{\frac{\ln \lambda}{\ln \frac{1}{\rho}}}\rho^{\frac{\ln^2\lambda}{2\ln^2\frac{1}{\rho}}}$.

Then (with log accuracy),

\begin{equation}
P(S<\delta)\leq e^{-\frac{\ln^2\lambda}{\ln \frac{1}{\rho}}+\frac{\ln^2\lambda}{2\ln\frac{1}{\rho}}+\delta \lambda}\sim e^{-\frac{\ln^2\lambda}{2\ln\frac{1}{\rho}}+\delta \lambda}
\end{equation}

Then when $\lambda=\displaystyle\frac{2\ln\lambda}{\delta\ln\frac{1}{\rho}}$,

\begin{equation}
P(S<\delta)=e^{(\ln\frac{1}{\delta}-\ln\ln\frac{1}{\delta}+C)^2+{2\ln\frac{1}{\rho}}+\frac{2\ln\frac{1}{\delta}}{\ln\frac{1}{\rho}}} \sim e^{-\frac{\ln^2\frac{1}{\delta}}{2\ln\frac{1}{\rho}}}
\end{equation}

Calculation of exact asymptotics is more difficult problem. For additional details, see (\cite{cell}).\\

\appendix


\acks
Dan Han was supported by University of Louisville EVPRI Grant "Spatial Population Dynamics with Disease" and AMS Research Communities "Survival Dynamics for Contact Process with Quarantine".
S. Molchanov was supported by the Russian Science Foundation RSF grant project 17-11-01098 and project 20-11-20119. 

%
%
%
%

\bibliographystyle{apt.bst}
\bibliography{mybib}

\end{document}